\newtheorem{thm}{Theorem}[section]
\newtheorem{cor}[thm]{Corollary}
\newtheorem{lem}[thm]{Lemma}
\newtheorem{proposition}[thm]{Proposition}
\newtheorem{prob}[thm]{Problem}
\theoremstyle{definition}
\newtheorem{rem}[thm]{Remark}
\numberwithin{equation}{section}
\DeclareMathOperator{\conv}{conv}
\DeclareMathOperator{\diam}{diam}
\begin{document}


\baselineskip=17pt



\title[Around the Petty theorem on equilateral sets]{Around the Petty theorem on equilateral sets}

\author[T. Kobos]{Tomasz Kobos}
\address{Faculty of Mathematics and Computer Science \\ Jagiellonian University \\ Lojasiewicza 6, 30-348 Krakow, Poland}
\email{Tomasz.Kobos@im.uj.edu.pl}

\date{}

\begin{abstract}
The main goal of this paper is to provide an alternative proof of the following theorem of Petty: in the normed space of dimension at least three, every $3$-element equilateral set can be extended to a $4$-element equilateral set. Our approach is based on the result of Kramer and N\'emeth about inscribing a simplex into a convex body. To prove the theorem of Petty, we shall also establish that for every $3$ points in the normed plane, forming an equilateral set of the common distance $p$, there exists a fourth point, which is equidistant to the given points with the distance not larger than $p$. We will also improve the example given by Petty and obtain the existence of a smooth and strictly convex norm in $\mathbb{R}^n$, which contain a maximal $4$-element equilateral set. This shows that the theorem of Petty cannot be generalized to higher dimensions, even for smooth and strictly convex norms.
\end{abstract}

\subjclass[2010]{46B85; 46B20; 52C17; 52A15; 52A20}

\keywords{Equilateral set, equilateral dimension, equidistant points, touching translates, norm, sphere, convex body}

\maketitle

\section{Introduction}
Let $X$ be a real $n$-dimensional vector space equipped with a norm $|| \cdot ||$. We say that a set $S \in X$ is \emph{equilateral}, if there is a $p>0$ such that $||x-y||=p$ for all $x, y \in S, x \neq y$. In such situation we will say that $S$ is a $p$-equilateral set. By $e(X)$ let us denote the \emph{equilateral dimension} of the space $X$, defined as the maximal cardinality of a equilateral set in $X$. For many classic spaces (like $\ell_{p}^n$) determining the equilateral dimension is an open problem. It is not difficult to show that the equilateral dimension of $n$-dimensional space equipped with the Euclidean norm is equal to $n+1$, and it is equal to $2^n$ for the $\ell_{\infty}$ norm. It is known (see \cite{petty} and \cite{soltan}) that $2^n$ is, in fact, an upper bound for the equilateral dimension of any normed space $X$ of dimension $n$. Moreover, the equality holds if, and only if there exists a linear isometry between $X$ and $\ell_{\infty}^{n}$. 

It is believed that $n+1$ is similarly a lower bound for the equilateral dimension of any $n$-dimensional normed space. For $n=1$ and $n=2$ it is an easy exercise. For $n=3$ it has been proved by Petty and in the case $n=4$ quite recently by Makeev in \cite{makeev2}. For $n \geq 5$ only weaker estimates on the size of a maximal equilateral set are known (for the best bound at this moment see \cite{swanepoel2}). In the three dimensional setting even more can be demonstrated. We have the following
\begin{thm}[\bf{Petty} \cite{petty}]
\label{tw1}
Let $X$ be a real $3$-dimensional vector space, equipped with a norm $|| \cdot ||$. Assume that $a, b, c \in X$ form a $p$-equilateral set in the norm $|| \cdot ||$. Then, there exists $d \in X$ such that
$$||d-a||=||d-b||=||d-c||=p.$$
In other words, every equilateral set of $3$ elements can be extended to an equilateral set of $4$ elements.
\end{thm}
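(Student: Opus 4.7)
The plan is to combine two ingredients: the planar equidistant lemma promised in the abstract and the Kramer--N\'emeth theorem on inscribed simplices in convex bodies. The role of the planar lemma is to exclude a degenerate configuration in which every equidistant point for $\{a,b,c\}$ sits too close; the role of Kramer--N\'emeth is to produce, in the remaining $3$-dimensional body, a fourth vertex realising the distance $p$ exactly.

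In more detail, the three points $a,b,c$ span a common affine plane $\pi \subset X$, so the planar lemma applied to $\pi$ (equipped with the inherited norm) produces $d_0 \in \pi$ with $\|d_0-a\|=\|d_0-b\|=\|d_0-c\|=q$ for some $q \leq p$. If $q=p$ we are immediately done, so assume $q<p$. Writing $B$ for the unit ball of the norm, set
\[
K := (a+pB) \cap (b+pB) \cap (c+pB).
\]
Then $d_0$ is interior to each of the three balls, so by continuity of the norm some open neighbourhood of $d_0$ in $X$ already lies in $K$; in particular $K$ is a genuine $3$-dimensional convex body.

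The target point $d$ is a point of $\partial K$ at which all three defining inequalities $\|d-v\| \leq p$ are simultaneously equalities, i.e.\ $d-a,\, d-b,\, d-c \in p\,\partial B$. I would produce such a $d$ by applying Kramer--N\'emeth to a convex body constructed from $B$ together with the translates $a+pB,\, b+pB,\, c+pB$, set up so that an inscribed simplex has three vertices matching $a,b,c$ and a fourth vertex lying on all three spheres $v+p\,\partial B$ simultaneously. The main obstacle is precisely calibrating this application of Kramer--N\'emeth: its conclusion is a simplex that is ``regular'' with respect to some auxiliary body, and one has to engineer the auxiliary body so that this regular simplex is forced to be the tetrahedron $\{a,b,c,d\}$ with common edge length exactly $p$, rather than some other equilateral configuration at a smaller scale. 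The planar lemma is what makes this engineering possible: it guarantees that the candidate fourth point does not have to live inside $\pi$ with common distance strictly less than $p$, so the inscribed simplex from Kramer--N\'emeth has genuine $3$-dimensional room to open up and reach the sphere of radius $p$.
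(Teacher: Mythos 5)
Your plan has the right ingredients on the table --- the planar equidistant result and the Kramer--N\'emeth theorem --- but the step you flag yourself as ``the main obstacle'' is not a technical calibration to be filled in later; it is the entire content of the proof, and the way you frame it cannot succeed. Kramer--N\'emeth, applied to a fixed smooth strictly convex body $C$ and a fixed simplex shape, only asserts that \emph{some} homothet of that simplex is inscribed in $C$; it gives no control over where the homothet sits or at what scale $r$. To ``engineer the auxiliary body so that the inscribed simplex is forced to be $\{a,b,c,d\}$'' you would already need to know $d$, which is circular. There is also a more basic mismatch: you need the fourth vertex to lie \emph{off} the plane $\pi$, whereas a single application of Kramer--N\'emeth to a single 3-dimensional body gives you no handle on which plane the inscribed tetrahedron ends up near. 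And the theorem requires $C$ smooth and strictly convex; your proposal never reduces an arbitrary 3-dimensional norm to that case, which in the paper is done via a polytope-preserving approximation (Proposition \ref{aproksymacja} together with Lemma \ref{lem3}).

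The paper's actual argument is structured quite differently and avoids all of this. It never applies Kramer--N\'emeth in dimension $3$ at all: instead it slices the unit ball $B$ by the one-parameter family of planes $\pi + tv$, and in each $2$-dimensional section $B_t$ it uses the planar case of Kramer--N\'emeth together with the planar uniqueness statement (Corollary \ref{wniosek}) to get a \emph{unique} inscribed homothet of the triangle $abc$, with a well-defined scale $r(t)$ depending continuously on $t$. The planar lemma (Proposition \ref{tw2}) is then used not to ``exclude a degenerate configuration,'' but to give the quantitative anchor $r(s)\ge 1$ at the central section through $0$, while $r(t)\to 0$ as the sections shrink toward the boundary; the intermediate value theorem then produces a section with $r(t)=1$, whose inscribing data yields $d$. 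Your construction of $K=(a+pB)\cap(b+pB)\cap(c+pB)$ and the observation that $q<p$ makes $K$ full-dimensional is correct but does not feed into any lemma you have available, so as written the proof does not close.
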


The main goal of this paper is to give an alternative proof of the theorem of Petty, which is obtained in section \ref{sec:5}. In his original reasoning Petty used the two dimensional result called monotonicity lemma (see section $3.5$ of \cite{geometry}). Our approach we will be based on the Theorem \ref{nemeth} of Kramer and N\'emeth. We cover the required material in section \ref{sec:prel}.

To take advantage of the mentioned techniques, we will need to investigate the properties of the circumcircle of an equilateral set in the plane. We shall prove
\begin{proposition}
\label{tw2}
Let $|| \cdot ||$ be a norm on the plane. Assume that $a, b, c \in \mathbb{R}^2$ form a $p$-equilateral set in this norm. Then, there exists $s \in \mathbb{R}^2$ such that
$$||a-s||=||b-s||=||c-s|| \leq p.$$
In other words, on the plane every equilateral set of size $3$ has a circumcircle of a radius not greater then the common distance.
\end{proposition}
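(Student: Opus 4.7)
The plan is to apply the KKM (Knaster--Kuratowski--Mazurkiewicz) lemma on the closed triangle $\Delta = \conv(a,b,c)$, with the three closed subsets
$$A_1 = \{x \in \Delta : ||x-a|| \le \min(||x-b||, ||x-c||)\},$$
and $A_2, A_3$ defined analogously. Any point $s \in A_1 \cap A_2 \cap A_3$ must satisfy $||s-a|| = ||s-b|| = ||s-c||$ directly from the definitions; denoting the common value by $r$, the bound $r \le p$ is immediate from $s \in \Delta$ together with the convexity of the norm, since for $s = \lambda_1 a + \lambda_2 b + \lambda_3 c$ one has
$$||s-a|| = ||\lambda_2(b-a) + \lambda_3(c-a)|| \le (\lambda_2 + \lambda_3)\,p \le p.$$

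To invoke KKM I would verify the three standard hypotheses. Each $A_i$ is closed by continuity of the norm; the union $A_1 \cup A_2 \cup A_3$ equals $\Delta$ because at every point one of the three distances is the smallest; and, most importantly, each edge of $\Delta$ lies inside the union of the two $A_i$'s indexed by its endpoints. The vertex inclusions $a \in A_1$, $b \in A_2$, $c \in A_3$ are immediate. For the edge $[a,b]$, I would parametrize $x = (1-t)a + tb$ with $t \in [0,1/2]$, observe $||x-a|| = tp$ and $||x-b|| = (1-t)p$, and apply the reverse triangle inequality
$$||x-c|| \ge ||a-c|| - ||a-x|| = (1-t)p \ge tp = ||x-a||$$
to conclude $x \in A_1$. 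The case $t \in [1/2,1]$ places $x$ in $A_2$ by symmetry, and the remaining two edges are handled identically.

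The main (indeed only) obstacle is this edge-covering condition, and as the displayed inequality shows it is a one-line consequence of the triangle inequality; once it is in place, KKM furnishes a point $s \in A_1 \cap A_2 \cap A_3$ and both the equidistance and the estimate $r \le p$ follow formally. The argument is entirely elementary and, in particular, avoids any analysis of the shape of the bisectors $\{x : ||x-a|| = ||x-b||\}$, which can be wild in a general planar norm.
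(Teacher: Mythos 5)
Your proof is correct, and it takes a genuinely different route from the paper's. The paper works on the \emph{unit circle} rather than on the triangle: it defines, for $x$ on the unit circle $S$, the functions $f(x)=\max\{t\ge 0: \|x+t(c-a)\|=1\}$ and $g(x)=\max\{t\ge 0: \|x+t(b-a)\|=1\}$, proves they are continuous and bounded by $2$, observes $f(a-c)=g(a-b)=2$, and applies the intermediate value theorem along an arc to find $z$ with $f(z)=g(z)=r$. This exhibits a homothet (ratio $r$) of the triangle $abc$ inscribed in $S$, so a circle of radius $1/r$ passes through $a,b,c$; the bound $r\ge 1$ comes from a short geometric argument using the parallelogram with vertices $0, c-b, a-b, a-c$. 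Your argument instead partitions $\Delta=\conv(a,b,c)$ into closed ``nearest-vertex'' regions $A_1, A_2, A_3$ and invokes the KKM lemma, with the edge condition falling out of the reverse triangle inequality; the point $s\in A_1\cap A_2\cap A_3$ is the circumcenter, and $\|s-a\|\le p$ follows at once from $s\in\Delta$ and convexity of the norm. The comparison: the paper's proof only needs the one-dimensional IVT and also hands you the inscribed homothet explicitly (which is the object reused in the three-dimensional argument of Theorem \ref{tw1}), while the KKM lemma is a stronger topological tool, equivalent to Brouwer. In compensation, your route gives the cleaner radius bound and the extra information that the circumcenter can be taken inside the triangle $\conv(a,b,c)$ --- something the paper's construction does not immediately yield. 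Your approach also sits comfortably alongside the paper's later use of the Kramer--N\'emeth theorem, which is itself a Brouwer-type result, though the computation in your edge-covering step shows the naive higher-dimensional analogue of your KKM decomposition fails on faces of dimension $\ge 2$, so it does not directly generalize.
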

Such a result have also appeared in \cite{martini} (see Lemma $2.4$ and Theorem $3.1$) in the case of strictly convex norm, but we give a simple proof.

In section \ref{sec:example}, we study an extension of the example given by Petty in \cite{petty}. For a given $n \geq 4$, he has constructed a norm in the space $\mathbb{R}^n$, with the $4$-element equilateral set which cannot be extended to a $5$-element equilateral set (in other words, it is maximal with respect to inclusion). In particular, Theorem \ref{tw1} cannot be generalized to higher dimensions. However, the norm given by Petty is not smooth and not strictly convex. It is therefore natural to ask, whether the smoothness or strict convexity of the norm would enable us to generalize Theorem \ref{tw1} to higher dimensions. We answer this question negatively in
\begin{thm}
\label{tw3}
For every $n \geq 4$ there exist a smooth and strictly convex norm $|| \cdot ||$ in $\mathbb{R}^n$ and $a_1, a_2, a_3, a_4 \in \mathbb{R}^n$ forming a $p$-equilateral set, but there is no $a_5$ such that 
$$||a_5 - a_1||=||a_5 - a_2||=||a_5 - a_3||=||a_5 - a_4||=p.$$
\end{thm}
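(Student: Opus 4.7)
The plan is to start from the explicit construction given by Petty: for each $n \geq 4$ there is a (non-smooth, non-strictly convex) centrally symmetric norm $\|\cdot\|_0$ on $\mathbb{R}^n$, with unit ball $K_0$, and four points $a_1, \ldots, a_4$ forming a maximal $p$-equilateral set. The strategy is to perturb $K_0$ to a smooth and strictly convex centrally symmetric body $K$ for which the same four points still form a $p$-equilateral set that cannot be extended.

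The first step is to approximate $K_0$ by a nearby centrally symmetric, smooth, strictly convex body $K_1$ via a standard regularization (for instance, a Minkowski sum with a small Euclidean ball, combined with convolution of the support function by a radial mollifier). Generically $K_1$ does not contain the six points $e_{ij} := (a_i - a_j)/p$ (for $i < j$) on its boundary, so equilaterality is lost. To repair this I would apply a finite-dimensional radial correction to the Minkowski functional, writing $\mu_K(x) = \mu_{K_1}(x)\bigl(1 + \sum_{i<j}\alpha_{ij}\phi_{ij}(x)\bigr)$, where $\phi_{ij}$ is a smooth $0$-homogeneous bump supported in a small neighbourhood of the antipodal pair $\{\pm e_{ij}\}$ on the sphere, with the supports pairwise disjoint and $\phi_{ij}(\pm e_{ij}) = 1$. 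The six equations $\mu_K(e_{ij}) = 1$ then decouple and each determines $\alpha_{ij}$ uniquely; moreover $\alpha_{ij}$ is small when $K_1$ is close to $K_0$. Finally, writing $F(K) = \bigcap_{i=1}^{4}(a_i + p\,\partial K)$, Petty's example says $F(K_0) = \emptyset$; since $F(K)$ lies in a fixed compact region (bounded by the diameter of any potential extension), a compactness and continuity argument shows $F(K) = \emptyset$ whenever $K$ is sufficiently close to $K_0$ in Hausdorff distance, giving the required non-extendability.

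The main obstacle is to verify that the bump-perturbation $\mu_K$ really defines a smooth strictly convex body, rather than merely a smooth positively $1$-homogeneous function. Convexity of $\mu_K$ is equivalent to positive semi-definiteness of its Hessian in spherical coordinates on the Euclidean sphere $S^{n-1}$, and strict convexity of $K_1$ makes this Hessian uniformly positive definite there for $\mu_{K_1}$. Choosing each bump $\phi_{ij}$ with uniformly bounded $C^2$ norm, sufficiently small amplitudes $\alpha_{ij}$ preserve strict positivity, so that $K$ is genuinely smooth and strictly convex. Once this quantitative estimate is in place, the four points $a_1, \ldots, a_4$ are $p$-equilateral in the norm induced by $K$ and cannot be extended to a five-element equilateral set, finishing the proof.
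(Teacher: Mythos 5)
Your overall strategy matches the paper's: approximate a non-smooth norm possessing a maximal 4-point equilateral set by smooth strictly convex norms that keep the six differences $a_i-a_j$ on the unit sphere, then use a compactness/limiting argument to propagate non-extendability from the limit to some member of the approximating sequence. The paper works with the $\ell_1$ norm rather than Petty's original $|x_1|+\sqrt{x_2^2+\cdots+x_n^2}$ (a cosmetic difference), and -- crucially -- it outsources the approximation step entirely to Ghomi's theorem (its Proposition~\ref{aproksymacja}), together with Lemma~\ref{lem3} to ensure the points $a_i-a_j$ are vertices of a symmetric convex polytope so that Ghomi's result applies. You instead try to build that approximation by hand, and that is where the proposal breaks down.

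The gap is in the assertion that ``strict convexity of $K_1$ makes this Hessian uniformly positive definite there for $\mu_{K_1}$.'' That is false. Strict convexity of a body (no segments on the boundary) does not imply that the tangential Hessian of its gauge is uniformly -- or even pointwise -- positive definite. The $\ell^4$ ball $\{x_1^4+x_2^4\le 1\}$ in the plane is smooth and strictly convex, yet the tangential second derivative of its gauge vanishes at $(\pm1,0)$ and $(0,\pm1)$. What you actually need is that $K_1$ be $C^2_+$, i.e.\ have uniformly positive Gaussian curvature, and your proposed regularization does not deliver that: the Minkowski sum of a convex body with a small Euclidean ball only rounds edges and vertices while preserving any flat or ruled pieces of the boundary (Petty's unit ball is a bicone with ruled lateral surface), so it is not strictly convex, and convolving the support function by a radial mollifier does not by itself repair this. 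Without the uniform Hessian bound, the ``small amplitude preserves convexity'' estimate has no quantitative foothold, and the perturbed functional $\mu_{K_1}(1+\sum\alpha_{ij}\phi_{ij})$ need not be the gauge of a convex body at all. Building a smooth, strictly convex (indeed $C^2_+$) symmetric body interpolating prescribed boundary points close to a given polytope is a genuinely delicate construction; this is exactly the content of Ghomi's result that the paper invokes. Either cite that result (as the paper does) or carry out a substantially more careful construction of $K_1$, for instance by taking an $L^p$-type interpolation of support functions that forces positive curvature, before the bump correction.
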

This improves the example given by Petty.

For a survey on equilateral sets in finite dimensional normed spaces, see \cite{swanepoel1}.

\section{Preliminaries}
\label{sec:prel}
We shall recall some standard definitions of Banach space theory and convex geometry. A convex and compact set $C \subset \mathbb{R}^n$ is called a \emph{convex body}, if its interior is non-empty. Convex body $C$ is \emph{symmetric}, if it is symmetric with respect to the origin. It is \emph{smooth}, if every point on the boundary of $C$ lies on exactly one supporting hyperplane of $C$. Convex body $C$ is a \emph{strictly convex body}, if it does not contain a segment on the boundary. The unit ball of any norm in $\mathbb{R}^n$ is a symmetric convex body and vice versa -- every symmetric convex body is a unit ball of exactly one norm. We say that a norm $|| \cdot ||$ is smooth or strictly convex, if the unit ball of norm $|| \cdot ||$ is smooth or strictly convex, respectively.  A sphere in the given norm on the plane is called a $\emph{circle}$.

A set $P \subset \mathbb{R}^n$ is called a \emph{convex polytope}, if it is the convex hull of a finite number of points, i.e. $P = \conv \{ p_1, p_2, \ldots, p_m \}$. If there does not exist a proper subset $S$ of $\{ p_1, p_2, \ldots, p_m \}$ such that $P = \conv S$, then the points $p_i$ are \emph{vertices} of the convex polytope $P$. Convex polytope $P \subset \mathbb{R}^n$ with exactly $n+1$ vertices is called a \emph{simplex}. A simplex is \emph{non-degenerated}, if it is not contained in the affine subspace of dimension $n-1$. A (positive) \emph{homothet} of a set $A \subset \mathbb{R}^n$ is a set of the form $\lambda A + v = \{ \lambda a + v : a \in A\}$, where $\lambda > 0$ and $v \in \mathbb{R}^n$ are arbitrary.

In the latter considerations we shall use the following theorem of Kramer and N\'emeth, which give a sufficient conditions that a convex body $C \subset \mathbb{R}^n$ has to satisfy in order to have the following property: every non-degenerated simplex of the space $\mathbb{R}^n$ has a homothet inscribed in $C$.

\begin{thm}[\bf{Kramer \& N\'emeth \cite{kramer}}]
\label{nemeth}
Let $C$ be smooth and strictly convex body of the space $\mathbb{R}^n$ and let $p_0, p_1, \ldots, p_n$ be vertices of a non-degenerated simplex. Then, there exist $z \in \mathbb{R}^n$ and $r>0$, such that points $z+rp_0, z + rp_1, \ldots, z+rp_n$ lie on the boundary of $C$. 
\end{thm}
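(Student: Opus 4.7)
The plan is to recast the inscribed-homothet problem as the existence of a zero of a suitable continuous vector field on $\overline{C}$, and to exploit smoothness and strict convexity in a boundary analysis combined with a topological degree argument.

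The conclusion is invariant under a common translation of $p_0,\dots,p_n$ (which can be absorbed into the translation vector $z$), so I may assume that the centroid of the simplex is at the origin: $\sum_{i=0}^n p_i = 0$. Non-degeneracy of the simplex then forces $p_0,\dots,p_n$ to span $\mathbb{R}^n$, so the linear map $\alpha \mapsto \sum_{i=0}^n \alpha_i p_i$ from $\mathbb{R}^{n+1}$ to $\mathbb{R}^n$ has one-dimensional kernel, spanned by $(1,\dots,1)$. For each $z \in \operatorname{int}(C)$, strict convexity yields a unique $r_i(z) > 0$ with $z + r_i(z) p_i \in \partial C$, and these radial functions depend continuously on $z$. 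Define the vector field
\[
G(z) = \sum_{i=0}^{n} r_i(z)\, p_i.
\]
Any zero $z_0 \in \operatorname{int}(C)$ of $G$ places the tuple $(r_0(z_0),\dots,r_n(z_0))$ inside the one-dimensional kernel above, which forces a common value $r>0$ for all $r_i(z_0)$, and then $z_0 + r p_i \in \partial C$ for every $i$, as required.

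To locate a zero of $G$, I would extend it continuously to $\overline{C}$. For $y \in \partial C$, let $n(y)$ be the outer unit normal (well-defined by smoothness), and set
\[
r_i(y) = \begin{cases} \text{the unique }r>0\text{ with }y+rp_i \in \partial C & \text{if } \langle p_i, n(y)\rangle < 0, \\ 0 & \text{otherwise.} \end{cases}
\]
The only delicate case for continuity is a point $y \in \partial C$ at which $\langle p_i, n(y)\rangle = 0$ for some $i$: the ray $y + tp_i$ ($t > 0$) then lies in the tangent hyperplane at $y$, which by strict convexity meets $\overline{C}$ only at $y$. Consequently the chord length $r_i(z)$ must tend to $0$ as $z \to y$ from the interior, which is exactly what continuity demands. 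I expect this tangential case to be the main obstacle of the argument, and it is precisely the point at which both hypotheses on $C$ are used essentially — smoothness to single out one tangent hyperplane, and strict convexity to ensure it touches $\partial C$ only at $y$.

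Finally, I verify that the extended $G$ points strictly inward on $\partial C$. For $y \in \partial C$,
\[
\langle G(y), n(y)\rangle = \sum_{i:\,\langle p_i, n(y)\rangle < 0} r_i(y)\, \langle p_i, n(y)\rangle \le 0,
\]
and equality would require $\langle p_i, n(y)\rangle \ge 0$ for all $i$; combined with $\sum_i p_i = 0$ this forces $\langle p_i, n(y)\rangle = 0$ for every $i$, putting the simplex into the hyperplane $n(y)^\perp$ and contradicting non-degeneracy. A continuous vector field on $\overline{C}$ (a topological ball) that points strictly inward on the boundary has nonzero index: the radial map $y \mapsto G(y)/\|G(y)\|$ on $\partial C$ is homotopic to $y \mapsto -n(y)$ via the straight-line homotopy $(1-t)G(y) - t\, n(y)$, which never vanishes by the strict-inwardness condition, and $-n$ (the antipodal of the Gauss map) has degree $(-1)^n \neq 0$. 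Therefore $G$ must vanish somewhere in $\operatorname{int}(C)$, producing the desired $z_0$ and completing the proof.
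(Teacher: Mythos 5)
The paper does not prove this statement at all: it is quoted as an external result of Kramer and N\'emeth, so there is no internal proof to compare against. Your argument is a genuine, essentially self-contained proof, and it is in the spirit advertised by the title of the cited source (``the application of Brouwer's fixed point theorem\dots''): an inward-pointing vector field on a topological ball must vanish. The structure is sound. The reduction to the kernel of $\alpha\mapsto\sum\alpha_i p_i$ is correct (with the centroid at the origin the $p_i$ span $\mathbb{R}^n$ and no $p_i$ is $0$, since the centroid lies in the relative interior of the simplex), the strict inwardness on $\partial C$ is where smoothness is genuinely needed (at a non-smooth point every $p_i$ could miss the interior of the support cone, producing a spurious boundary zero --- consistent with the paper's remark that smoothness cannot be dropped), and the homotopy to $-n$ plus the nonvanishing degree of the Gauss map is a standard and valid way to force a zero.

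The one step you assert rather than prove is the continuity of the extended field on all of $\overline{C}$. The clean way to close it is to define $r_i(z)=\max\{t\ge 0: z+tp_i\in \overline{C}\}$ uniformly on $\overline{C}$ (this agrees with both of your case definitions). Upper semicontinuity is automatic by compactness: if $z_k\to y$ and $r_i(z_k)\to\tau$, then $y+\tau p_i\in\overline{C}$, so $\tau\le r_i(y)$; in your tangential case strict convexity gives $r_i(y)=0$, which settles it. Lower semicontinuity at a boundary point with $r_i(y)>0$ follows because strict convexity puts the open chord $\{y+tp_i: 0<t<r_i(y)\}$ in the interior of $C$, so nearby points inherit chords of almost the same length. (Concavity of $z\mapsto r_i(z)$ gives continuity in the interior for free.) It is worth noting that this continuity statement is exactly the $n$-dimensional analogue of the paper's Lemma \ref{lem2}, which proves the planar case by a concavity argument; so the missing detail is both true and of a piece with the techniques the paper itself uses.
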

It is worth pointing out that, while smoothness is necessary, the condition of strict convexity can in fact be dropped, as Gromov in \cite{gromov} and Makeev in \cite{makeev1} have shown independently later on. For our purposes however the weaker version of the theorem is sufficient.

It is much harder to guarantee the uniqueness of such inscribed homothet. However, in the two dimensional case we have the following
\begin{proposition}
\label{jedynosc}
Let $C$ be a strictly convex body in the plane and let $p_0, p_1, p_2$ be vertices of the non-degenerated triangle. Then there exists at most one pair of $z \in \mathbb{R}^2$ and $r>0$, such that points $z+rp_0, z + rp_1, z+rp_2$ lie on the boundary of $C$. 
\end{proposition}
\begin{proof} An elementary proof can be found in \cite{geometry}, section 3.2. \end{proof}

Combining these two results, with $C$ chosen as a smooth and strictly convex body in the plane, we obtain
\begin{cor}
\label{wniosek}
Let $C$ be a smooth and strictly convex norm in the plane. Then, for every non-degenerated triangle, there exists exactly one homothet of $C$ passing through its vertices.
\end{cor}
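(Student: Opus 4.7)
The plan is to derive the corollary as a direct combination of the preceding Theorem \ref{nemeth} and Proposition \ref{jedynosc}, both applied in dimension $n = 2$. The only substantive bookkeeping required is to translate the parametrization ``$z + rp_i \in \partial C$'' used in those two results into the parametrization ``$\lambda C + v$'' that defines a positive homothet.

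For existence, I would invoke Theorem \ref{nemeth} with $n = 2$ applied to the vertices $p_0, p_1, p_2$ of the given non-degenerated triangle, using that $C$ is both smooth and strictly convex. This yields $z \in \mathbb{R}^2$ and $r > 0$ with $z + rp_i \in \partial C$ for $i = 0, 1, 2$. Rewriting the incidence condition as $p_i \in \partial\bigl(\tfrac{1}{r}(C - z)\bigr) = \partial\bigl(\tfrac{1}{r}C - \tfrac{z}{r}\bigr)$ exhibits a homothet of $C$, with ratio $\lambda = 1/r > 0$ and translation vector $v = -z/r$, whose boundary passes through $p_0, p_1, p_2$.

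For uniqueness, I would apply Proposition \ref{jedynosc}, which is valid since $C$ is strictly convex, and which guarantees that at most one pair $(z, r)$ satisfies the incidence condition above. Because the map $(z, r) \longleftrightarrow (\lambda, v) = (1/r, -z/r)$ is a bijection between pairs $(z, r)$ with $r > 0$ and positive homothets $\lambda C + v$ of $C$, uniqueness at the level of $(z, r)$ transfers directly to uniqueness of the homothet.

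There is essentially no genuine obstacle here: all the substance is contained in Theorem \ref{nemeth} and Proposition \ref{jedynosc}, and the corollary follows by assembling them. The only item that warrants a line of verification is the elementary bijection between the two parametrizations of a positive homothet, which is what allows the existence and uniqueness statements to be phrased in the language of homothets rather than in the $(z, r)$ form inherited from the two preceding results.
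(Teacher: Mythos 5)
Your proposal is correct and is exactly the argument the paper intends: existence from Theorem \ref{nemeth} with $n=2$, uniqueness from Proposition \ref{jedynosc}, linked by the elementary reparametrization $(z,r)\leftrightarrow(1/r,-z/r)$ (the paper simply writes ``combining these two results'' and leaves these details implicit). The only micro-gap is that you also need the map $(\lambda,v)\mapsto\lambda C+v$ to be injective, which is immediate since $C$ is a bounded set with nonempty interior; this does not affect the soundness of your argument.
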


\begin{rem}
A generalization of the Proposition \ref{jedynosc} to higher dimensions does not hold in the following strong sense: if every simplex in $\mathbb{R}^n$ (where $n \geq 3$) has at most one homothet inscribed into a fixed convex body $C$, then $C$ must be an ellipsoid. This characterization of finite dimensional Hilbert space was established by Goodey in \cite{goodey}.
\end{rem}

To take advantage of the preceeding results we have to assume smoothness and strict convexity of a norm. To reduce the general case to such setting, we will use a special kind of smooth and strictly convex approximation. Proof of Theorem \ref{tw3} will also rely on this technique. We have the intuitive
\begin{proposition}
\label{aproksymacja}
Assume that in the space $\mathbb{R}^n$ there are given: a norm $|| \cdot ||$ and the unit vectors $\pm p_1, \pm p_2, \ldots, \pm p_m$, which are also vertices of a symmetric convex polytope. Then for every $\varepsilon > 0$ there exists a smooth and strictly convex norm $|| \cdot ||_0$ in $\mathbb{R}^n$ such that
$$(1-\varepsilon)||x||_0 \leq ||x|| \leq (1+\varepsilon)||x||_0$$
for every $x \in \mathbb{R}^n$ and $||p_i||_0=1$ for $i=1, 2, \ldots, m$.
\end{proposition}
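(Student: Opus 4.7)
My plan is to realize $||\cdot||_0$ as the gauge of a smooth and strictly convex symmetric body $B_0$ constructed from the unit ball $B$ of $||\cdot||$ in two stages: a global smoothing of $B$, followed by a local radial correction that pulls the boundary exactly through each $\pm p_i$.

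For the first stage I would invoke the standard fact that every symmetric convex body can be approximated in the Hausdorff metric by smooth and strictly convex symmetric bodies (for instance, by mollifying the support function of $B$, adding a small multiple of the Euclidean support function, and symmetrizing with respect to the origin). This produces a smooth and strictly convex symmetric body $D$ with
\[ (1-\varepsilon/2)\, B \subset D \subset (1+\varepsilon/2)\, B. \]
In particular each scalar $\mu_i := ||p_i||_D$ lies in $\bigl[(1+\varepsilon/2)^{-1},(1-\varepsilon/2)^{-1}\bigr]$, and the boundary point $\mu_i p_i\in\partial D$ is close to $p_i$ but generically not equal to it.

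For the second stage the decisive observation is that $p_i$ is a vertex of the polytope $P := \conv\{\pm p_1,\ldots,\pm p_m\}$: its normal cone in $P$ has non-empty interior, and for $i\neq j$ the normal cones at $\pm p_i$ and $\pm p_j$ share only lower-dimensional faces. Denoting by $|\cdot|$ a fixed auxiliary Euclidean norm, I would fix pairwise disjoint spherical caps $U_i$ around the directions $p_i/|p_i|$ on the Euclidean unit sphere $S^{n-1}$ (arranged so that $U_i\cap(-U_j)=\emptyset$ for $i\neq j$), and modify the radial function of $D$ only inside $U_i\cup(-U_i)$ via a smooth multiplicative bump, with the bump value at $\pm p_i/|p_i|$ chosen precisely so that the new radial function equals $|p_i|$ there. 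Performing each correction symmetrically at $p_i$ and $-p_i$ keeps the body symmetric, and since $B_0$ coincides with $D$ off the caps, the bound $(1-\varepsilon)B\subset B_0\subset(1+\varepsilon)B$ is inherited after relaxing $\varepsilon/2$ to $\varepsilon$; by construction $p_i\in\partial B_0$, i.e.\ $||p_i||_0=1$.

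The main obstacle is verifying that $B_0$ remains convex, and indeed strictly convex. This is a $C^2$-continuity matter: on $\partial D$ the second fundamental form is uniformly positive definite, and a sufficiently $C^2$-small multiplicative perturbation of the radial function preserves strict positive definiteness, hence yields again a smooth and strictly convex body. The correction has size $O(\varepsilon)$, and its $C^2$-norm can be kept bounded by scaling a fixed smooth bump profile on $S^{n-1}$; so by first tightening the stage-one approximation and then taking small enough bumps, strict convexity is preserved and $B_0$ has all the required properties, yielding the norm $||\cdot||_0$ sought.
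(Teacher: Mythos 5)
The paper does not give a proof at all; it simply cites Ghomi's paper on optimal smoothing of convex polytopes as a ``much more general result.'' So your attempt is, by design, a genuinely different route: a self-contained two-stage construction rather than an appeal to a black-box theorem. Unfortunately, the second stage has a real gap.

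The problem is in the convexity-preservation step. You argue that since the bump has height $O(\varepsilon)$ and a fixed angular profile, its $C^2$-norm is $O(\varepsilon)$, and that a $C^2$-small radial perturbation of a body with uniformly positive second fundamental form stays strictly convex. But the lower bound on the second fundamental form of $D$ is \emph{not} uniform in $\varepsilon$: as you tighten the stage-one approximation so that $(1-\varepsilon/2)B\subset D\subset(1+\varepsilon/2)B$, the curvature of $\partial D$ near any flat portion of $\partial B$ is forced down to $O(\varepsilon/\delta^2)$ on an arc of angular width $\delta$ (by concavity of the local graph: $2f(0)-f(\delta)-f(-\delta)\le 4\varepsilon$, hence $|f''|\lesssim\varepsilon/\delta^2$). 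This is of the \emph{same order} as the $C^2$-norm of the bump you must apply, since $|\mu_i-1|$ is itself only $O(\varepsilon)$ and the cap size $\delta$ is fixed by the configuration of the $p_i$. A concrete trouble spot: take $B$ the $\ell_\infty$ ball in the plane with $p_1=(1,-0.9)$ and $p_2=(1,0.9)$, both vertices of the rectangle $\conv\{\pm p_1,\pm p_2\}$ and both lying in the interior of the flat right-hand facet of $B$; every admissible $D$ is nearly flat at these directions, and the perturbation and the curvature reservoir shrink at the same rate. Your argument as written never establishes which dominates, and the intuition ``tighten stage one, then take small bumps'' is circular because the bump height is not a free parameter -- it is pinned by the stage-one tightness, which simultaneously depletes the curvature.

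To rescue your approach you would have to decouple these scales -- for instance, by choosing $D$ not as a generic smoothing of $B$ but as one engineered to have curvature bounded below on the caps $U_i$ by a constant independent of $\varepsilon$ (which requires controlling the geometry of $\partial B$ near the $p_i$), or by shrinking the caps at rate $\delta\sim\sqrt\varepsilon$ and re-examining the curvature inequality with explicit constants. Either way this needs a quantitative comparison that the sketch elides. Also note a minor slip: if $\mu_i=\|p_i\|_D$ then the boundary point of $D$ in the direction of $p_i$ is $p_i/\mu_i$, not $\mu_i p_i$, and the containment $(1-\varepsilon/2)^{-1}(1+\varepsilon/2)\le 1+\varepsilon$ is false, so the stage-one tolerance must be strictly smaller than $\varepsilon/2$ (say $\varepsilon/4$) even before the bumps are applied.
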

\begin{proof} See \cite{ghomi} for a much more general result. \end{proof}

\section{Circumcircle of the equilateral set on the plane.}
\label{sec:plane}
In this section we prove \ref{tw2}. Results from the preceeding section are not required here. We begin with a simple
\begin{lem}
\label{lem2}
Let $|| \cdot ||$ be a norm in the plane and let $v \in \mathbb{R}^2$ be a non-zero vector. Consider a mapping $f$, defined on the unit disc $B$ of norm $|| \cdot ||$ as
$$f(x) = \max \{ t \geq 0: ||x+tv|| = 1 \}.$$
Then $f$ is continuous and bounded by $\frac{2}{||v||}$.
\end{lem}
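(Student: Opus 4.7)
Proof plan. The plan is to establish separately the bound $f(x) \leq 2/\|v\|$ (together with the fact that the maximum in the definition is attained) and the continuity of $f$ on $B$.

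For boundedness, if $t > 2/\|v\|$ then the reverse triangle inequality together with $\|x\| \leq 1$ gives
$\|x + tv\| \geq t\|v\| - \|x\| > 2 - 1 = 1$, so no such $t$ satisfies $\|x + tv\| = 1$, and hence $f(x) \leq 2/\|v\|$. The maximum in the definition of $f(x)$ is attained because the set $\{t \geq 0 : \|x + tv\| = 1\}$ is closed (as a preimage of $\{1\}$ under a continuous function), bounded by what was just shown, and nonempty by the intermediate value theorem applied to $t \mapsto \|x + tv\|$ (which is $\leq 1$ at $t = 0$ and tends to $+\infty$).

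For continuity I would fix linear coordinates on $\mathbb{R}^2$ making $v$ parallel to the first axis, $v = (\|v\|, 0)$, and introduce the right profile of $B$. Let $[a, b]$ be the projection of $B$ onto the second coordinate, and define $r : [a, b] \to \mathbb{R}$ by $r(u_2) := \max\{u_1 : (u_1, u_2) \in B\}$. Convexity of $B$ then shows directly that for $x = (x_1, x_2) \in B$ the ray $x + tv = (x_1 + t\|v\|, x_2)$ lies in $B$ exactly for $t \in [0, (r(x_2) - x_1)/\|v\|]$, with norm equal to $1$ at the right endpoint, so $f(x) = (r(x_2) - x_1)/\|v\|$ and continuity of $f$ reduces to continuity of $r$. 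The function $r$ is concave as a consequence of convexity of $B$, and therefore automatically continuous on the open interval $(a, b)$. At an endpoint such as $u_2 = b$, concavity yields $\lim_{u_2 \to b^-} r(u_2) \geq r(b)$, while passing to the limit in $(r(u_2), u_2) \in B$ as $u_2 \to b^-$ and using the closedness of $B$ yields the reverse inequality; combining the two gives continuity of $r$ on the closed interval $[a, b]$.

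The main obstacle is precisely the continuity of $r$ at the endpoints of its domain, which corresponds geometrically to the case where $\partial B$ contains a segment parallel to $v$: concavity alone does not rule out a downward jump of $r$ at the endpoint, and the compactness and closedness of $B$ are essential to rule this out.
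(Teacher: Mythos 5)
Your proof is correct and follows essentially the same route as the paper: both reduce the continuity of $f$ to that of a one-variable ``right profile'' function in the coordinate transverse to $v$, obtaining one semicontinuity from concavity of this profile and the other from closedness of the ball (respectively, of the unit sphere). Your explicit identity $f(x) = (r(x_2) - x_1)/\|v\|$ is a slightly cleaner way to state the reduction than the paper's ``$f = f\circ P$'', and your endpoint analysis makes explicit what the paper packs into the cited fact that concave functions on closed intervals are lower semicontinuous.
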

\begin{proof} Indeed, it is easy to see that $f$ is bounded by $\frac{2}{||v||}$. In fact, for any point $x \in B$ of unit disc and any $t > \frac{2}{||v||}$, we have
$$|||x+tv|| \geq ||tv|| - ||x|| > 2 - 1 = 1,$$ 
from the triangle inequality. This proves the second part.

It is sufficient to prove the continuity of $f$. Let $D$ be the diameter of the $B$ orthogonal to $v$ and denote by $P: B \to D$ the orthogonal projection. It is clear that $f$ factors as $f = f \circ P$ and it is therefore enough to check the continuity of the restriction $\tilde{f}=f|_{D}$.

Let $(x_n)_{n \in \mathbb{N}} \subset D$ be a sequence converging to $x \in D$. As $\tilde{f}$ is bounded, it is enough to check that every convergent subsequence of $(\tilde{f}(x_n))_{n \in \mathbb{N}}$ converges to $\tilde{f}(x)$. So, let us assume that $\lim_{k \to \infty} \tilde{f}(x_{n_k}) = y$ for some subsequence $(\tilde{f}(x_{n_k}))_{k \in \mathbb{N}}$. Since
$$||x+yv|| = \lim_{k \to \infty} ||x_{n_k} + \tilde{f}(x_{n_k})v|| = 1,$$
we have $y \leq \tilde{f}(x)$, by the definition of $f$. 

On the other hand, it is immediate to check that $\tilde{f}$ is a concave function. As concave functions defined on the closed intervals are lower semi-continuous we must have $y \geq \tilde{f}(x)$. Therefore $y=\tilde{f}(x)$ and the lemma is proved. \end{proof}

\emph{Proof of Proposition \ref{tw2}.} We can suppose that $p=1$ and denote the unit circle in norm $|| \cdot ||$ by $S$. Let us define mappings $f, g: S \rightarrow \mathbb{R}$ as
$$f(x) = \max \{t \geq 0:  ||x + t(c-a)|| = 1 \} \text{ and } g(x) = \max \{t \geq 0:  ||x + t(b-a)|| = 1 \}.$$
By the preceeding lemma, mappings $f$ and $g$ are continuous and bounded by $2$. At the same time, we see that $f(a-c) = g(a-b) = 2$. Therefore $f(x) \geq g(x)$ for $x=a-c$ and $f(x) \leq g(x)$ for $x=a-b$.

The intermediate value theorem implies that on the closed arc of unit circle between points $a-b$ and $a-c$, there exists point $z$ such that $f(z)=g(z)=r$ for some $r \geq 0$ (see Figure \ref{fig:1}). For such a chosen $z$ we have
$$||z+r(c-a)||=||z+r(b-a)||=1.$$ 
If $r>0$, then the points $z, z+r(c-a), z+r(b-a)$ are vertices of non-degenerated triangle inscribed into the unit circle, which is a homothet of the triangle with vertices $a$, $b$, $c$. Therefore, we can circumscribe a circle of a radius $\frac{1}{r}$ on the triangle with vertices $a, b, c$. It suffices to show that $r \geq 1$ (and in particular $r>0$).

\begin{figure}
\centering
\includegraphics{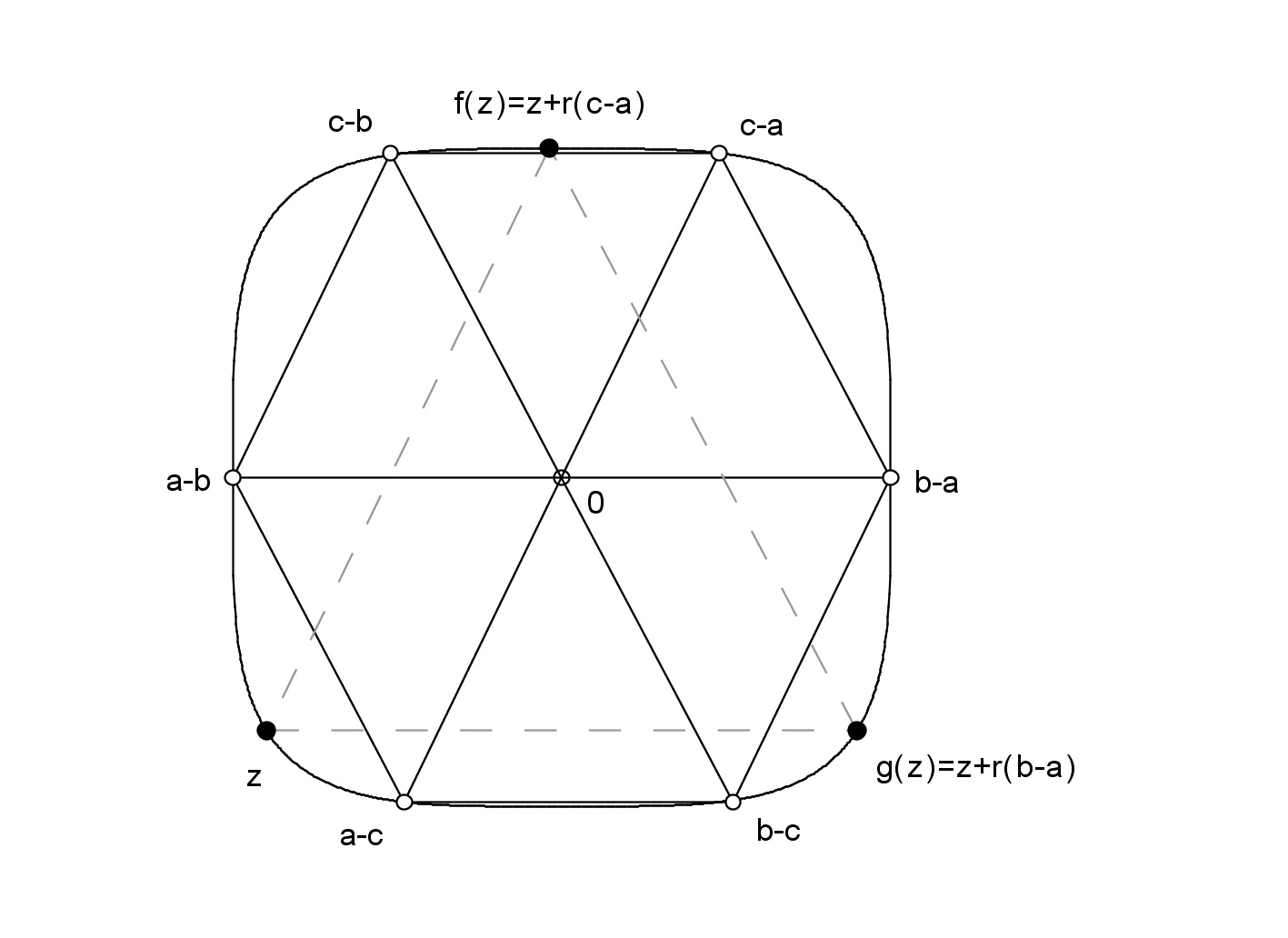}
\caption{Proof of Proposition \ref{tw2}}
\label{fig:1} 
\end{figure}

To do that, let us consider the parallelogram with vertices $0, c-b, a-b, a-c$. Points $z$ and $z+r(c-a)$ lie outside of this parallelogram and the segment connecting them is parallel to one of its sides (as $(c-b) - (a-b) = c-a$). Hence $r \geq 1$. \qed

\section{Proof of Petty's theorem}
\label{sec:5}
In this section we prove Theorem \ref{tw1}. In the sequel we need only the two and three dimensional cases of the following lemma, but we prove it in general form.
\begin{lem}
\label{lem3}
In the space $\mathbb{R}^n$ we are given the points $p_0, p_1, \ldots, p_n$, which are vertices of the non-degenerated simplex. Then the points $p_i - p_j$, where $0 \leq i \neq j \leq n$, are vertices of the convex polytope $\conv \{ p_i - p_j: 0 \leq i \neq j \leq n \}$.
\end{lem}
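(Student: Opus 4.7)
The plan is to fix an arbitrary pair $(i,j)$ with $i\neq j$ and exhibit a linear functional on $\mathbb{R}^n$ that attains its strict maximum on the finite set $\{p_k-p_\ell : 0\leq k\neq \ell\leq n\}$ uniquely at $p_i-p_j$. Since a point of a finite set is a vertex of its convex hull if and only if some linear functional is uniquely maximized there, this gives the conclusion for each $p_i-p_j$ in turn.

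To construct the functional, I would use the fact that $p_0,p_1,\ldots,p_n$ are affinely independent (the simplex being non-degenerate). Therefore for any prescribed values $c_0,c_1,\ldots,c_n\in\mathbb{R}$ there exists an affine function $\phi:\mathbb{R}^n\to\mathbb{R}$ with $\phi(p_k)=c_k$ for all $k$. I would choose $c_i=1$, $c_j=-1$, and $c_k=0$ for every $k\notin\{i,j\}$. Writing $\phi(x)=L(x)+\alpha$ with $L$ linear, the constant $\alpha$ cancels in every difference, so $L(p_k-p_\ell)=\phi(p_k)-\phi(p_\ell)$ for all $k,\ell$.

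Next I would verify that $L(p_i-p_j)=2$ is the unique maximum of $L$ on the set of differences. A short case check on the possible values of $(k,\ell)$ with $k\neq \ell$ shows that $L(p_k-p_\ell)\in\{-2,-1,0,1\}$ unless $(k,\ell)=(i,j)$: if $k=i$ and $\ell\neq j$, we get $1$; if $\ell=j$ and $k\neq i$, we get $1$; if $k,\ell\notin\{i,j\}$ we get $0$; if $k=j$ or $\ell=i$ (but not the reverse pair), the value is at most $0$; and $(k,\ell)=(j,i)$ gives $-2$. In every case the value is strictly less than $2$, so $p_i-p_j$ is the unique maximizer of $L$ on the set, hence an extreme point of the convex hull.

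There is no real obstacle here beyond the bookkeeping in the case analysis; the only conceptual input is the affine independence of the simplex vertices, which is precisely the non-degeneracy hypothesis and is exactly what allows the prescribed values $c_0,\ldots,c_n$ to be realized by some affine (hence linear up to a constant) functional.
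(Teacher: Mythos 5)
Your proof is correct, and it takes a genuinely different route from the paper's. The paper argues by contradiction: it assumes (after relabeling) that $p_n - p_0$ is a nontrivial convex combination $\sum t_{k,\ell}(p_k - p_\ell)$, rewrites everything in the linearly independent basis $q_i = p_i - p_0$, compares coefficients at $q_n$ to force $\sum_\ell t_{n,\ell} = 1$ and $t_{k,\ell}=0$ for $k\neq n$, and then shows any $t_{n,\ell}\neq 0$ with $\ell\geq 1$ would leave a nonzero coefficient at $q_\ell$. This is a direct coordinate computation. You instead use the dual characterization of vertices: affine independence lets you prescribe $\phi(p_i)=1$, $\phi(p_j)=-1$, $\phi(p_k)=0$ otherwise, and the linear part $L$ of $\phi$ then satisfies $L(p_i-p_j)=2$ while all other differences land in $\{-2,-1,0,1\}$, so $p_i-p_j$ is exposed by $L$. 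Your approach is arguably cleaner and more symmetric (no relabeling, no WLOG), and it additionally makes it visible that the $\binom{n+1}{2}\cdot 2$ difference points are pairwise distinct, since the strict separation $L(p_i-p_j)>L(p_k-p_\ell)$ rules out coincidences. The paper's computation is a bit more self-contained in that it never invokes the supporting-functional characterization of vertices, but both arguments are elementary and roughly the same length.
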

\begin{proof} For the sake of a contradiction let us suppose that some point can be written as a convex combination of the remaining points. Without loss of generality, we can assume that it is $p_n - p_0$.

Let us denote $q_i = p_i - p_0$ for $i=0, 1, 2, \ldots, n$. Because the simplex with the vertices $p_0, p_1, \ldots, p_n$ is non-degenerated, the vectors $q_i$ are linearly independent for $i=1, 2, \ldots, n$. We will show, that if
$$q_n = \sum_{0 \leq k \neq l \leq n} t_{k, l} (p_k - p_l),$$
where $t_{k, l} \in [0, 1]$ and $\sum_{0 \leq k \neq l \leq n} t_{k, l} = 1$, then $t_{n, 0} = 1$ and $t_{k, l} = 0$ for $(k, l) \neq (n, 0)$. We have
$$q_n = \sum_{0 \leq k \neq l \leq n} t_{k, l} (p_k - p_l) = \sum_{0 \leq k \neq l \leq n} t_{k, l} (p_k - p_0 + p_0 - p_l) = \sum_{0 \leq k \neq l \leq n} t_{k, l} (q_k - q_l)$$
$$= \sum_{k=1}^n  \left( \sum_{\substack{0 \leq l \leq n \\ l \neq k}} t_{k, l} - \sum_{\substack{0 \leq l \leq n \\ l \neq k}}^n t_{l, k} \right ) q_k.$$
The vectors $q_1, q_2, \ldots, q_n$ are linearly independent and hence, comparing the coefficients at $q_n$ gives us 
$$\sum_{l=0}^{n-1} t_{n, l} - \sum_{l=0}^{n-1} t_{l, n} = 1.$$
Because the numbers $t_{k, l}$ are non-negative and their sum is $1$, we obtain that 
$$\sum_{l=0}^{n-1} t_{n, l} = 1 \text{ and } t_{k, l} = 0 \text{ for } k \neq n.$$
If $t_{n, 0}=1$ and $t_{n, l} = 0$ for $1 \leq l \leq n-1$ the claim follows. So, let us assume that $t_{n, l} \neq 0$ for some $1 \leq l \leq n-1$. But then it is easy to see that the coefficient at $q_l$ in the considered combination is equal to $-t_{n, l}$ and therefore it is non-zero. This is a contradiction with the linear independence of $q_1, q_2, \ldots, q_n$, which proves the lemma. \end{proof}

With the results of preceeding sections at hand, we are ready to give an alternative proof of Petty's theorem.

\paragraph{Proof of Theorem \ref{tw1}}. Suppose that $p=1$. First we assume that the norm $|| \cdot ||$ is smooth and strictly convex. Let $\pi$ be the affine plane containing $a, b, c$. In a similar fashion to Makeev in \cite{makeev2}, we will consider sections of the unit ball $B$ with planes parallel to $\pi$ and base our approach on the continuity of these cuts. All such sections, which are not empty and not single-point are smooth and strictly convex bodies in the plane. From Corollary \ref{wniosek} we know that in all such sections we can inscribe exactly one homothet of the triangle with vertices $a, b, c$. To be precise, let $v$ be the unit vector perpendicular to $\pi$ and denote $B_t = B \cap (\pi + tv)$. Let $t_1<t_2$ be such that
$$\# B_t > 1 \iff t \in (t_1,t_2).$$
Corollary \ref{wniosek} yields that for every $t \in (t_1, t_2)$ there exists exactly one pair of $z \in \pi$ and $r > 0$ such that
$$||z+ra+tv||=||z+rb+tv||=||z+rc+tv||=1.$$
Denote by $z: (t_1,t_2) \rightarrow \pi$ and $r: (t_1,t_2) \rightarrow (0, \infty)$ mappings which assign to a given $t \in (t_1, t_2)$ the unique mentioned vector and unique positive number, respectively. We shall verify, that the mapping $(z, r): (t_1, t_2) \to \pi \times (0, \infty)$ is continuous. Indeed, suppose that $t_n \to t$. It is easy to see that $z$ and $r$ are bounded, and therefore we can pick subsequence $(t_{n_k})_{k \in \mathbb{N}}$ such that sequences $(z(t_{n_k}))_{k \in \mathbb{N}}$ and $(r(t_{n_k}))_{k \in \mathbb{N}}$ converge to some $z'$ and $r'$ respectively. It follows from the continuity of the norm that
$$||z'+r'a+tv||=||z'+r'b+tv||=||z'+r'c+tv||=1,$$
and taking the uniqueness into account we conclude that $z'=z(t)$ and $r'=r(t)$. This proves our claim.

In particular, mapping $r$ is continuous. Let $s \in (t_1, t_2)$ be such number that $B_s$ is a section containing $0$. Then $B_s$ is a smooth, strictly convex and symmetric convex body obtained by restricting the norm $|| \cdot ||$ to the two dimensional vector space parallel to $\pi$. By Theorem \ref{tw2}, it follows that $r(s) \geq 1$. Moreover, if $t \to t_1$, then $\diam B_t \to 0$. Therefore continuity implies that $r(t) = 1$ for some $t \in (t_1, t_2)$. Then
$$||z(t) + tv + a|| = ||z(t) + tv + b|| = ||z(t) + tv + c|| = 1,$$
and hence $d=-(z(t) + tv)$ is the desired point. It completes the proof in the case of a smooth and strictly convex norm.

Now let $|| \cdot ||$ be an arbitrary norm in $\mathbb{R}^3$. We shall reduce this case to the previous one by application of Lemma \ref{lem3} and Proposition \ref{aproksymacja}. As the points $a-b, b-c, c-a, b-a, c-b, a-c$ are vertices of the symmetric convex hexagon, Proposition \ref{aproksymacja} implies that for every $n \in \mathbb{N}$ we can find a smooth and strictly convex norm $|| \cdot ||_n$ such that 
$$\left (1-\frac{1}{n} \right ) ||x||_n \leq ||x|| \leq \left ( 1+\frac{1}{n} \right )||x||_n$$
for every $x \in \mathbb{R}^3$ and
$$||a-b||_n=||b-c||_n=||c-a||_n=1.$$

We have already proved, that for every $n \in \mathbb{N}$ we can find $d_n \in \mathbb{R}^3$ such that
$$||d_n - a ||_n = ||d_n - b||_n = ||d_n - c||_n = 1.$$
It is clear that the sequence $(d_n)_{n \in \mathbb{N}}$ is bounded and hence, it contains some subsequence $(d_{n_k})_{k \in \mathbb{N}}$ convergent to a point $d \in \mathbb{R}^3$. 
From the inequalities
$$ 1-\frac{1}{n_k} \leq ||d_{n_k}-a|| \leq 1+\frac{1}{n_k}$$
it follows that $||d-a|| = 1$ and analogously $||d-b||=||d-c||=1$. \qed

\section{Existence of a smooth and strictly convex norm with a $4$-element maximal equilateral set}
\label{sec:example}
In the space $\mathbb{R}^n$ we can introduce a norm $|| \cdot ||$ given by $||(x_1, x_2, \ldots, x_n)|| = |x_1| + \sqrt{x_2^2 + x_3^2 + \ldots + x_n^2}$. Petty has proved that for $n \geq 4$ it is possible to find a $4$-element equilateral set in such normed space, which is maximal with respect to inclusion. In \cite{swanepoel3} Swanepoel and Villa have generalized this example to every space of the form $X \oplus_1 \mathbb{R}$, where $X$ has at least one smooth point on the unit sphere. Spaces arising in such way are never smooth or strictly convex, however. In the same paper the authors have also proved that some of the $\ell^{n}_p$ spaces (with $n \geq 4$ and $1 < p < 2$) contain $5$-element equilateral sets which are maximal with respect to inclusion. It remains to answer, if a smooth and strictly convex space of dimension $n \geq 4$ can possess such a $4$-element equilateral set. Using the Proposition \ref{aproksymacja} we are ready to obtain, in a non-constructive way, the existence of a space with such property.

\paragraph{Proof of Theorem \ref{tw3}.} In the space $\mathbb{R}^n$ consider the $\ell_1$ norm (which of course is not smooth and not strictly convex itself) i.e. $||(x_1, x_2, \ldots, x_n)|| = |x_1| + |x_2| + \ldots + |x_n|$. Let
$$a_1 =  \bigg ( 1, 0, \ldots, 0  \bigg ), a_2 = \bigg ( -1, 0, \ldots, 0 \bigg ), a_3=\left ( 0, \frac{1}{n-1}, \frac{1}{n-1}, \frac{1}{n-1}, \ldots, \frac{1}{n-1} \right ),$$
$$a_4= \left(0,  -\frac{1}{4(n-1)}, -\frac{3}{4(n-1)}, -\frac{1}{n-1}, -\frac{1}{n-1}, \ldots, -\frac{1}{n-1} \right ).$$

It is immediate to check that $a_i$'s form a $2$-equilateral set in the $\ell_1$ norm. We will prove that there does not exist $x \in \mathbb{R}^n$ which expands this set to a $5$-point equilateral set. Indeed, suppose that $x=(x_1, x_2, \ldots, x_n)$ is such point. From the equalities
$$||x-a_1|| = ||x+a_1|| = 2$$
we conclude that $x_1=0$ and $|x_2|+|x_3| + \ldots |x_n| = 1$. Combining this with
$$||x-a_2||=||x-a_3||=2,$$
we get
$$ 2  = |x_2| + |x_3| + \ldots + |x_n| + 1 =\left | x_2 - \frac{1}{n-1} \right | + \left | x_3 - \frac{1}{n-1} \right | + \ldots \left | x_n-\frac{1}{n-1} \right |$$
$$ = \left | x_2 + \frac{1}{4(n-1)} \right | + \left | x_3 + \frac{3}{4(n-1)} \right | + \left | x_4 + \frac{1}{n-1} \right | + \ldots + \left | x_n + \frac{1}{n-1} \right |.$$
We shall show that such system of equations does not have a solution. Indeed, the casual triangle inequality easily implies that
$$|x_2| + |x_3| + \ldots + |x_n| + 1 \geq \left | x_2 - \frac{1}{n-1} \right | + \left  | x_3 - \frac{1}{n-1} \right | + \ldots + \left | x_n-\frac{1}{n-1} \right |$$
and
$$|x_2| + |x_3| + \ldots + |x_n| + 1 \geq  \left | x_2 + \frac{1}{4(n-1)} \right | + \left | x_3 + \frac{3}{4(n-1)} \right |+ \left | x_4 + \frac{1}{n-1} \right |$$
$$+ \ldots + \left | x_n + \frac{1}{n-1} \right |.$$
Moreover, in the inequality $|a| + |b| \geq |a+b|$ equality holds exactly when $a$ and $b$ are of the same sign. Therefore, vector $(x_1, x_2, \ldots, x_n) = (0, 0, \ldots, 0)$ is the only possible solution of the considered system, but it is not of the norm $1$. This proves our claim.

It is not hard to check by hand the linear independence of the vectors $a_2 - a_1$, $a_3 - a_1$ and $a_4 - a_1$, which implies, that $a_1$, $a_2$, $a_3$, $a_4$ are vertices of non-degenerated tetrahedron. By Lemma \ref{lem3} we know that set $ \{ a_i - a_j : 1 \leq i \neq j \leq 4 \}$ is the set of vertices of its convex hull. Applying Proposition \ref{aproksymacja}, we can pick a sequence $|| \cdot ||_k$ of smooth and strictly convex norms such that
$$\left (1-\frac{1}{k} \right ) ||x||_k \leq ||x|| \leq \left ( 1+\frac{1}{k} \right )||x||_k$$
for every $x \in \mathbb{R}^n$ and $||a_i - a_j||_k = 2$ for $1 \leq i \neq j \leq 4$ and $k \in \mathbb{N}$. 

Now assume on the contrary, that our assertion is not true. It means, that in any smooth and strictly convex norm in the space $\mathbb{R}^n$ (where $n \geq 4$), every $4$-point equilateral set can be expanded to a $5$-point equilateral set. In particular, for every $k \in \mathbb{N}$ there exists $x_k \in \mathbb{R}^n$ such that
$$||x_k - a_1||_k = ||x_k - a_2||_k = ||x_k - a_3||_k = ||x_k - a_4||_k = 2.$$
As $(x_k)_{k \in \mathbb{N}}$ is bounded, it has a subsequence convergent to $x \in \mathbb{R}^n$. Now it is easy to see that $x$ expands $a_1$, $a_2$, $a_3$, $a_4$ to a $5$-point equilateral set in the $\ell_1$ norm. This is a contradiction with the previous part of reasoning and the conclusion follows. \qed

\section{Concluding remarks}
\label{sec:7}
As we already mentioned in the introduction, probably the most natural question in the field of equilateral sets which remains open is
\begin{prob}[see \cite{grunbaum}, \cite{petty}, \cite{schutte}]
\label{pytanie}
Does the inequality $e(X) \geq n+1$ hold, for every normed space $X$ of dimension $n$?
\end{prob}
It is reasonable to ask if the approach that led us to the alternative proof of Petty's theorem can be helpful in answering Question \ref{pytanie} also in higher dimensions. Using similar approach Makeev has proved the case $n=4$ in \cite{makeev2}. It is also the largest dimension, for which the answer to Question \ref{pytanie} is known to be affirmative. Reduction of the general case to the situation in which the norm is smooth and strictly convex can be done in the exactly same way as in the presented proof. We can therefore try to use Theorem \ref{nemeth}. On the other hand, Proposition \ref{jedynosc} cannot be generalized to higher dimensions and in consequence it seems that establishing a continuous behaviour of the spheres, circumscribed about the variable simplex, is a serious technical issue. However, the main difficulty lies probably in obtaining the higher-dimensional analogue of Theorem \ref{tw2}. We already know from the previous section that such analogue could hold only for certain equilateral sets, i.e. we would have to find an equilateral set with the sphere of small radius passing through its vertices. This requires a different idea than for the planar case.

\end{document}